\newcommand{\op}{\operatorname}
\newcommand{\R}{\mathbb{R}}
\newcommand{\Z}{\mathbb{Z}}
\newcommand{\stab}{\operatorname{stab}}
\newcommand{\x}{\chi}
\newcommand{\wt}{\operatorname{wt}}
\newcommand{\infl}{\operatorname{inf}}
\newcommand{\C}{\mathbb{C}}
\newcommand{\old}[1]{{}}
\newcommand{\CP}{\mathbb{C}\mathbb{P}}
\theoremstyle{remark}
\newtheorem{example}[equation]{Example}
\newtheorem{counterexample}[equation]{Counterexample}
\newtheorem{remark}[equation]{Remark}
\theoremstyle{plain}
\newtheorem{theorem}[equation]{Theorem}
\newtheorem{lemma}[equation]{Lemma}
\newtheorem{corollary}[equation]{Corollary}
\newtheorem{defn}[equation]{Definition}
\definecolor{forestgreen}{rgb}{0.0, 0.27, 0.13}
\newcommand{\hidden}[1]{\footnote{Hidden:  #1}}
\renewcommand{\hidden}[1]{}
\begin{document}
\title{An enriched count of nodal orbits in an invariant pencil of conics}

\author[Bethea]{Candace Bethea}

\begin{abstract}

 This work gives an equivariantly enriched count of nodal orbits in a general pencil of plane conics that is invariant under a linear action of a finite group on $\CP^2$. 
 This is both inspired by and a departure from $R(G)$-valued enrichments such as Roberts's equivariant Milnor number and Damon's equivariant signature formula. Given a $G$-invariant general pencil of conics, the weighted sum of nodal orbits in the pencil is a formula in $A(G)$ in terms of the base locus considered as a $G$-set. We show this is true for all finite groups except $\mathbb{Z}/2\times \mathbb{Z}/2$, $A_4$, and $D_8$ and give counterexamples for the  exceptional groups. 
\end{abstract}

\maketitle

\section{Introduction}

Given a pair of conics defined by equations $f$ and $g$ which intersect generically in $\CP^2$, there is a family of curves parameterized by $\CP^1$ $$X=\{\mu f(x,y,z)+\lambda g(x,y,z)=0\colon [\mu: \lambda] \in\CP^1\}.$$  This is the pencil of conics spanned by $f$ and $g$, and specifying different $[\mu:\lambda]$ in $\CP^1$ specifies different conics in the pencil. The set $$\Sigma=\{p\in\CP^2\colon f(p)=g(p)=0\}$$ is called the base locus of $X$, and it contains 4 distinct points since $f$ and $g$ intersect generically. It is natural to ask how many conics in $X$ are nodal. As long as $f$ and $g$ are in general position, there are always  $|\Sigma|-1=4-1=3$ nodal conics in $X$. This is a classical result that is also case of G\"{o}ttsche's conjecture \cite{Got98}, which was proved by Y. Tzeng in \cite{Tzeng12} and  Kool-Schende-Thomas in \cite{KST11}. 

Rather than ask for the number of nodal conics, one can ask if there is a description of orbits of  nodal conics under the presence of a finite group action on $\CP^2$ under which the pencil is invariant. This work gives a formula for the count of nodal orbits when 
the group $G$ acting on $\CP^2$ is not isomorphic to $\Z/2\times \Z/2$, $A_4$, or $D_8$. We define the $G$-weight of a nodal orbit, which could be considered a special case of a Burnside-valued Milnor number inspired by the $R(G)$-valued Milnor number of \cite{Roberts85} and signature formula of \cite{Damon91}. The main result of this work shows that the sum of these $G$-weights of nodal orbits is equal to $[\Sigma]-\{*\}$ in the Burnside Ring, $A(G)$. 

To be precise, let $G$ be a finite group that acts linearly on $\CP^2$, and let $f$ and $g$ be equations defining a general pair of conics in $\CP^2$ such that the corresponding pencil, $X$, is $G$-invariant.  
From the linear action on $\CP^2$, we obtain an action on $\text{Sym}^2(\mathbb{C}^3)^{\vee} = \text{Span}\{x^2,y^2,z^2,xy,xz,yz\}$. 
 $X$ being $G$-invariant means that $g\cdot C_t$ is another conic in $X$ for all $g$ in $G$ and for all conics $C_t$ in $X$, where we've written $t=[\mu: \lambda]\in \CP^1$ for simplicity so that $C_t$ is the conic obtained by specifying $t$. In other words, $X$ is $G$-invariant if $\langle f, g\rangle$ is a $G$-invariant subspace of $\text{Sym}^2(\mathbb{C}^3)^{\vee}$. 
With this setup, one can ask for a Burnside-valued formula of $G$-sets counting orbits of nodal conics in the $G$-invariant pencil of conics $X$. Given such a formula, we can take the cardinality of the $H$-fixed points of the formula for any subgroup $H$ of $G$ to obtain an integer count of nodal conics that are fixed by $H$. 

The main theorem is proved in Theorem \ref{MAIN} in Section \ref{mainsection}, stated below: 

\begin{theorem}\label{thm:intro_main_thm} Let $G$ be a finite group not isomorphic to  $\mathbb{Z}/2\times\mathbb{Z}/2$, $A_4$, or $D_8$, and assume $G$ acts linearly on $\CP^2$. Let $X$ be a $G$-invariant general pencil of conics in $\CP^2$, and let $[\Sigma]$ in $A(G)$ represent the base locus of $X$. Then 
\begin{equation}\label{eq:intro_main_eq}\sum_{ \substack{G\cdot C_t\text{, }  \\ C_t \in X \text{ is nodal}}} \wt^G(C_t)=[\Sigma]-\{*\}\end{equation}
 in $A(G)$. That is, there is a weighted count of orbits of nodal conics in $X$, valued in the Burnside ring of $G$.
\end{theorem}
For any subgroup $H$ of $G$, the cardinality of $$([\Sigma]-\{*\})^H
$$ is equal to the number of nodal conics in $X$ that are fixed by $H$.  In particular, we recover the classical count of $|\Sigma|-1=3$ nodal conics by taking $H$ to be the trivial subgroup, and we recover the number of nodal conics that are fixed under the action on $\CP^2$ by taking $G$-fixed points. In this sense, the equivariant enrichment in Theorem \ref{thm:intro_main_thm} is a  generalization of the classical result counting $|\Sigma|-1$ nodal conics in a general pencil. This is a virtual count, in particular there may be cases where the cardinality of the fixed points of \eqref{eq:intro_main_eq} is equal to $-1$,  so it is crucial that Theorem \ref{thm:intro_main_thm}  be interpreted as a virtual weighted invariant count, with the weight of a nodal conic defined in Definition \ref{weight}. 

The Burnside Ring, $A(G)$, of a finite group $G$ is the Grothendieck group completion the monoid of $G$-isomorphism classes of finite $G$-sets with addition given by disjoint union. $A(G)$ also has a ring structure given by Cartesian product. 
Equivariant formulas of $G$-sets should be valued in $A(G)$, as above, as $A(G)$ distinguishes equivariant homotopy classes of endomorphisms of $G$-representation spheres. 
Specifically,  $$\text{deg}^G\colon \left[S^V, S^V\right]^G\stackrel{\sim}{\longrightarrow} A(G)$$ is an isomorphism 
 (see \cite{S70}), analogous to $\text{deg}\colon \left[S^n,S^n\right]\stackrel{\sim}{\longrightarrow} \mathbb{Z}$ being an isomorphism,  which motivates the replacement of $\mathbb{Z}$ by $A(G)$ as the ring of definition for equivariant enumerative results. Further description of the Burnside ring is given in Section \ref{Notationsection}.

The weighting convention for nodal orbits in $X$ appearing in the left-hand side of equation \eqref{eq:intro_main_eq} is defined in Section \ref{mainsection} before the main theorem is restated, and it generalizes the real sign of a node in the sense that the $G$-fixed point cardinality of the weight of a real node is $+1$ or $-1$ depending whether the node is split or non-split when $G=\op{Gal}(\C/\R)$ acts on $\CP^2$. 

\noindent{\bf Acknowledgements.} The author would like to thank Jesse Kass, Kirsten Wickelgren, and Brendan Hassett for being fantastic PhD and postdoctoral advisers. I would also like to thank Sabrina Pauli, Thomas Brazelton, and Alicia Lamarche for helpful conversations that have enhanced my understanding of enumerative geometry  and enrichments of enumerative results. I was supported by an SREB Dissertation Fellowship while conducting this work. I am currently supported by NSF DMS-2402099.

\section{Notation and definitions}\label{Notationsection}

This section will introduce all definitions related to the Burnside Ring so the paper is self-contained, following \cite{TD79}. We will always assume $G$ is a finite group, and all group actions are assumed to be left actions. Given $G$-sets $S$ and $T$, a set map $f\colon S\to T$ is $G$-equivariant if $g\cdot f(s)=f(g\cdot s)$ for all $g$ in $G$. 
Given a $G$-set $S$ and a subset $S'$ of $S$, we will say $S'$ is $G$-invariant  
if $g\cdot s'$ is in $S'$ for all $s'$ in $S'$ and $g$ in $G$.

Given any two $G$-sets $S$ and $T$, there are natural set operations on $S$ and $T$ that form new $G$-sets. We can take the disjoint union of $S$ and $T$, $S\sqcup T$, or the Cartesian product, $S\times T$, and obtain $G$-sets by letting $G$ act diagonally in both cases. Let $A(G)^{+}$ denote the semi-ring of $G$-isomorphism classes of finite $G$-sets with addition given by disjoint union and multiplication by Cartesian product, where $G$-isomorphism means an isomorphism of sets that is $G$-equivariant. 

\begin{defn} Given a group $G$, the \emph{Burnside ring} of $G$, denoted $A(G)$, is the Grothendieck group completion of $A(G)^+$, which is also a ring under Cartesian product. 
\end{defn}

Additively, $A(G)$ is the free abelian group on isomorphism classes of transitive $G$-sets of the form $[G/H]$ for subgroups $H$ of $G$. Given any $G$-set $S$, we will denote its class in $A(G)$ by $[S]$. The set $\{*\}$ will always denote the one-point set, also commonly denoted $[G/G]$ in terms of additive generators. 
Any $G$-set which comes from a genuine set with a group action will be called a genuine $G$-set. This is in contrast to a virtual $G$-set, 
for example, $-\{*\}$ denotes the virtual $G$-set that is the formal additive inverse of the  $G$-set $\{*\}$. Literature on the Burnside ring is rich, a standard reference being \cite{TD79}. 

Any genuine finite $G$-set can be written as the disjoint union of its orbits $\sqcup G/H_i$ where $\{H_i\}$ is some finite collection of subgroups of $G$.  
Furthermore, the isomorphism class of $[G/H]$ in $A(G)$ is determined by the conjugacy class of $H$ in $G$. Thus every genuine $G$-set $[S]$ in $A(G)$ can be written as $$[S]=\sum_{H_i\leq G}n_i[G/H_i]$$ for some positive integers $n_i$, uniquely up to choice of conjugacy class representative for each $H_i$. 
A well-known result in equivariant topology, see Proposition 1.2.2 in  \cite{TD79}, says that two isomorphism classes of finite $G$-sets, $[S_1]$ and $[S_2]$, are equal in $A(G)$ if $|(S_1)^H|=|(S_2)^H|$ for all subgroups $H$ of $G$. These facts are simple to state but useful in practice, and will be important in Section \ref{counterexamplesection}.

Given $G$-sets $S$ and $T$, the group and ring operations in $A(G)$ produce new $G$-sets given by disjoint union and Cartesian product. Given a finite group $G$ and a subgroup $H$ of $G$, we use the inflation from $H$ to $G$ as a change of group method to obtain a $G$-set from an $H$-set.  
\begin{defn}  Given a subgroup $H$ of $G$ and an $H$-set X, we define a $G$-set with underlying set structure $(G\times X)/\sim$, where $(gh,x)\sim(g,hx)$ for all $h$ in $H$, and $x$ in $X$. The {\em inflation of $X$ from $H$ to $G$}, denoted $\infl_{H}^G(X)$, is the $G$-set $(G\times X)/\sim$ with $G$-action given by $g'\cdot(g,x)=(g'g,x)$ for all $g'\in G$ and $(g,x)\in\infl_H^G(X)$. \end{defn}

Every genuine $G$-set we will encounter in this paper will already be represented as a formal sum of orbits, each equal to $[G/H]$ in $A(G)$ for some subgroup $H$ of $G$. Thus it will be useful to have a description of the inflation of an $H$-set to $G$ when represented by a sum of orbits of this form. The following lemma gives such a description.

\begin{lemma}\label{inflation} Let $H$ be a subgroup of a finite group $G$ and let $[X]$ be a finite $H$-set the form $$[X]=\sum_{i=1}^m n_i[H/K_i]$$ in $A(H)$ for some $m\in \mathbb{N}$, some $n_i\in \mathbb{Z}$, and $K_i\leq H$ some finite collection of subgroups of $H$. Then $\infl_H^G(X)=\sum_{i=1}^m n_i[G/K_i]$ in $A(G)$.
\end{lemma}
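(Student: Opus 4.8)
The plan is to reduce the statement to the case of a single orbit and then exhibit an explicit $G$-isomorphism. The key observation is that the inflation construction $(G\times X)/\!\sim$ is additive in the $H$-set $X$: for a disjoint union $X_1\amalg X_2$ of $H$-sets one has $G\times(X_1\amalg X_2)=(G\times X_1)\amalg(G\times X_2)$, and the defining relation $\sim$ never identifies a point coming from $X_1$ with one coming from $X_2$, so $\infl_H^G(X_1\amalg X_2)\cong\infl_H^G(X_1)\amalg\infl_H^G(X_2)$. Consequently $\infl_H^G$ descends to a well-defined additive map $A(H)\to A(G)$ of Grothendieck groups, and this additivity is what lets the identity extend to arbitrary integer coefficients $n_i$, including the negative ones coming from virtual $H$-sets. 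Thus it suffices to prove $\infl_H^G(H/K)\cong G/K$ for a single subgroup $K\leq H$.

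For this core step I would define the candidate map $\phi\colon \infl_H^G(H/K)\to G/K$ by $\phi(g,hK)=ghK$ and verify that it is the desired isomorphism. First I would check that $\phi$ is well defined on equivalence classes: applying the defining relation gives $\phi(g\bar h,hK)=g\bar hhK=\phi(g,\bar hhK)$ for all $\bar h\in H$, so $\phi$ respects $\sim$. Equivariance is immediate from the definition of the $G$-action, since $\phi(g'\cdot(g,hK))=\phi(g'g,hK)=g'ghK=g'\cdot\phi(g,hK)$, and surjectivity is clear because $\phi(a,eK)=aK$ for every $a\in G$.

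The one point requiring care, and the main (if modest) obstacle, is injectivity, since that is where the bookkeeping of the relation $\sim$ matters. Here I would first pass to a normal form: using $(g,hK)\sim(gh,eK)$, every class has a representative of the shape $(a,eK)$, and a further application of the relation with $\bar h=k\in K$ shows $(ak,eK)\sim(a,eK)$ for all $k\in K$. Hence the class of $(a,eK)$ is governed exactly by the coset $aK\in G/K$, which is precisely the value recorded by $\phi$; this yields injectivity and completes the identification $\infl_H^G(H/K)\cong G/K$. An alternative to the explicit bijection would be to invoke the fixed-point criterion (Proposition 1.2.2 of \cite{TD79}) and compare $|\infl_H^G(H/K)^L|$ with $|(G/K)^L|$ for each $L\leq G$, but the direct map seems cleaner and avoids a separate orbit-counting argument.
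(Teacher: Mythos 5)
Your proposal is correct and follows essentially the same route as the paper: reduce by additivity of $\infl_H^G$ over disjoint unions to the single-orbit case, then exhibit the map $(g,hK)\mapsto ghK$ and check it is a $G$-equivariant bijection. The only difference is one of detail — the paper declares well-definedness, injectivity, and surjectivity ``straightforward,'' whereas you carry out those checks explicitly (in particular the normal-form argument for injectivity), which is a welcome elaboration rather than a different method.
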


\begin{proof} First note  $$\infl_H^G(\sum_{i=1}^m n_i[H/K_i])=\sum_{i=1}^m n_i\infl_H^G([H/K_i])$$ because the action on a disjoint union is assumed to be diagonal. Thus we only need to show that $\infl_H^G(H/K)=[G/K]$ in $A(G)$ for any $K\leq H$, i.e., by defining a set isomorphism $\infl_H^G(H/K)\to G/K$ and showing it is $G$-equivariant.

Define $f\colon \infl_H^G(H/K)\to G/K$ by $f((g,hK))=ghK$. 
$f$ is well-defined, injective, and surjective as a set function. 
Finally, $f$ is $G$-equivariant, as
  $$g'\cdot f((g,hK))= 
g'ghK=f((g'g,hK))=f(g'\cdot(g,hK))$$
 for all $g'$ in  $G$ and $(g,hK)$ in $\infl_H^G(H/K)$. 
\end{proof}

\section{Proof of the Classical Result}\label{classicalproof}

We sketch a topological proof of the classical result, which is well-known to enumerative geometers but may be new for some. 
Let $f$ and $g$ be a general pair of conics in $\CP^2$, and let $$X=\{\mu f+\lambda g=0\colon [\mu, \lambda] \in\CP^1\}\subseteq\CP^2$$ be the pencil of conics defined by $f$ and $g$.  
Let $$X_{tot}=\{([\mu:\lambda],p)\colon \mu f(p)+\lambda g(p)=0\}\subseteq \CP^1\times \CP^2$$ be the total space of $X$. We have two projections from $X_{tot}$, $\pi_1\colon X_{tot}\to \CP^1$ by projecting onto the first coordinate and $\pi_2\colon X_{tot}\to \CP^2$ by projecting onto the second coordinate. We will compute $\chi(X_{tot})$ in two ways and set them equal to obtain the number of nodal conics.

First we will compute $\chi(X_{tot})$ using the projection $\pi_1\colon X_{tot}\to \CP^1$. Let $$D=\{[\mu, \lambda]\colon \mu f+\lambda g=0\}\subseteq\CP^1$$ be the set of points in $\CP^1$ that specify a nodal conic in $X$. Note that $|D|$ is equal to the number of nodal conics in $X$, which is what we want to find. The fibers of $\pi_1$ over $D$ are nodal conics, and the fibers over $\CP^1-D$ are smooth conics.
The compactly supported Euler characteristic is multiplicative in fiber bundles and additive over the base $\CP^1 = (\CP^1-D)\sqcup D$, thus
\begin{align*}
\chi(X_{tot})&=\chi(X_{tot}|D)+\chi(X_{tot}|\CP^1-D)\\
&=\x(C_{sing})\cdot\x(D)+\x(C_{sm})\cdot\x(\CP^1-D)
\end{align*}
where $C_{sm}$ denotes any smooth conic in a fiber over $\CP^1-D$ and $C_{sing}$ denotes any nodal conic in a fiber over $D$. This is the the topological Hurwitz formula applied to $X_{tot}\to \CP^1$,  if $E\to B$ is a fiber bundle with fiber $F$ and $B$ is path connected then 
$\x(F)\cdot\x(B)=\x(E)$.

The Euler characteristic of a smooth genus $g$ plane curve is $2-2g$, and the Euler characteristic of a genus $g$ plane curve $C$ with an isolated singularity at a point $p$ is $2-2g+\mu_p(C)$ where $\mu_p(C)$ is the Milnor number of $C$ at $p$. Since conics have genus 0 and the Milnor number of an ordinary nodal singularity is 1, $\chi(C_{sing}) = 3$ and 
\begin{align*}
\chi(X_{tot})&=\x(C_{sing})\cdot\x(D)+\x(C_{sm})\cdot\x(\CP^1-D)\\
&=3|D|+2(2-|D|)\\
&=|D|+4.
\end{align*}
 
The second way to compute $\x(X_{tot})$ is to use the fact that $\pi_2\colon X_{tot}\to \CP^2$ is the blow-up of $\CP^2$ at the $d^2=4$ points of the base locus $$\Sigma=\{p\in\CP^2\colon f(p)=g(p)=0\},$$ where $d=2$ is the degree of $f$ and $g$ as homogenous polynomials in three variables.   
Again using additivity for the compactly supported Euler characteristic, we have
\begin{align*}
\x(X_{tot})
&= \x(\CP^2)+ \x(\Sigma)(\x(\CP^1)-\x(pt))\\
&= 3+4(2-1)\\
&= 7. 
\end{align*}

Combining the two calculations of $\chi(X_{tot})$ we get  \[|D|+4=7,\] and we conclude that the number of nodal conics in $X$ is $|D|=3$. This approach works equally well for higher degree degree curves intersecting generically. Another proof can obtained by taking the degree of the top Chern class of the bundle of principle parts on $\mathcal{O}(d)$, both approaches can be found in detail in \cite[Chapter 7]{EH16}. As mentioned in the introduction, it is worth noting that another way to write the formula for $|D|$ is \[|D|=|\Sigma|-1,\] which motivates the form of Equation \eqref{eq:intro_main_eq} in Theorem \ref{thm:intro_main_thm}.

An elementary geometric proof of the same result can be described as follows. If $f$ and $g$ define a general pair of conics, then they intersect in exactly the four points of $\Sigma$. A nodal conic geometrically has irreducible components equal to a pair of lines, and the generic intersection assumption on $f$ and $g$ rules out the possibility that the two curves share a common line. Thus, asking how many conics in $X$ are nodal is equivalent to asking how many ways there are to draw a pair of distinct lines through four points in $\CP^2$, which is three.  
Labeling the points of $\Sigma$ as $b_1, b_2, b_3$, and $b_4$, 
the three pairs of lines are: 

\begin{figure}[ht]
\centerline{\includegraphics[width=.75\textwidth]{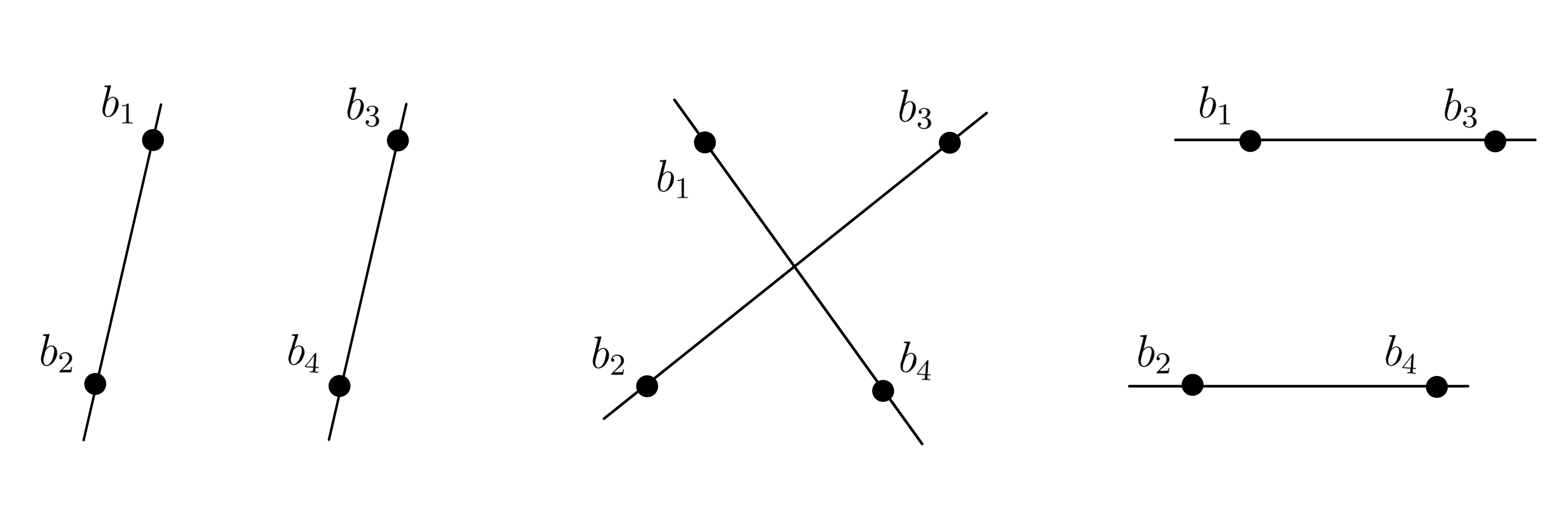}}
\caption{Distinct lines through $\Sigma=\{b_1, b_2, b_3, b_4\}$}
\label{figure:lines_through_Sigma}
\end{figure}

When describing the $G$-orbits of nodal conics, we can instead describe orbits of configurations of disjoint lines through $\Sigma$. 
 
Finally, note that any set of four points in $\mathbb{CP}^2$ uniquely determines a pencil of conics. Indeed, the vector space of conics in $\CP^2$ is 5-dimensional, $\text{Span}_{\mathbb{C}}\{x^2, y^2, z^2, xy, xz, yz\}$. Requiring that a conic passes through a point imposes a 1-dimensional condition on the space of conics, so requiring that a conic passes through four points results in a 1-dimensional linear span of conics, 
i.e., a pencil of conics. These conditions are linearly independent when the 4 points are in general position. Thus any $\Sigma$ which is a set of four points in general position $\CP^2$ uniquely determines a general pencil of conics.

\section{An Equivariant Count of Orbits of Nodal Conics} \label{mainsection}

Let $f$ and $g$ be homogeneous degree 2 equations defining a pair of plane conic curves that intersect generically, and let $$X=\{\mu f+\lambda g=0\colon [\mu:\lambda]\in\CP^1\}\subseteq \CP^2$$ be the corresponding pencil. We will henceforth write $t=[\mu:\lambda]$ so $C_t\in X$ denotes the element of $X$ obtained by specifying $[\mu: \lambda]$ in $\CP^1$.  
Given a nodal conic $C_t$ in $X$, we will write $B_t$ to denote the set of irreducible components of $C_t$.  Thus $B_t=\{L_1,L_2\}$ is the set of  branches of $C_t$ if $C_t$ is a nodal conic parameterized as a product of lines $L_1\times L_2$. 

\begin{defn}\label{weight} Let $C_t$ be a nodal conic in a $G$-invariant pencil of conics, and let $G_t\leq G$ be the stabilizer of $C_t$. Define the $G_t$-weight of $C_t$ to be $$\wt^{G_t}(C_t):=[B_t]-\{*\}$$ in $A(G_t)$ where $[B_t]$ denotes the branches of $C_t$ as a $G_t$-set in $A(G_t)\leq A(G)$. The \emph{$G$-equivariant weight} of the orbit  $G\cdot C_t$ is defined to be $$\wt^G(C_t):=\infl_{G_t}^G(\wt^{G_t}(C_t))$$ in $A(G)$.  
\end{defn} 

Given nodal conics $C_{s}$ and $C_{t}$ in the same orbit it is straightforward to check that $\wt^G(C_t)=\wt^G(C_s)$ in $A(G)$ using Lemma \ref{inflation}, so the weight of an orbit is well-defined. 

When the action of $G$ on $\CP^2$ is trivial,  $\stab(C_t)=G$ for all nodal $C_t$ in $X$, and so $\wt^G(C_t)=[B_t]-\{*\}$. Since the action is trivial, the branches in $[B_t]$ are fixed and $[B_t]=2\{*\}$ in $A(G)$. Thus $\wt^G(C_t)=2\{*\}-\{*\}$ has cardinality $2-1=1$. Therefore the cardinality of $$\sum_{G\cdot C_t\in X \text{, $C_t$ nodal}}
\wt^G(C_t)$$
is 3, recovering the classical result that there are 3 nodal conics in a pencil spanned by two conics in general position. 

In general, a nodal orbit $G\cdot C_t$ might contain one or multiple conics with branches that are not fixed. Rather than taking the cardinality of $\sum \wt^G(C_t)$, we could also take the cardinality of the $H$-fixed points of $\sum \wt^G(C_t)$ for any subgroup of $H$ of $G$, and this is not guaranteed to be 3. Instead, this will give a signed count of nodal conics fixed by $H$. If $G=\op{Gal(\C/\R)}$ acts on $\CP^2$ by pointwise complex conjugation, $G$-fixed points of \eqref{eq:intro_main_eq} recover the weighted count of nodal conics in $X$ defined over $\mathbb{R}$ up to a sign, as non-equivariantly the number of real conics in a pencil is $-(|\Sigma(\mathbb{R})|-1)$ rather than $|\Sigma| -1$. This comparison to a real signed count is discussed at the end of in this section, culminating in Corollary \ref{cor:real_count}.

Finally, we mention that Definition \ref{weight} is topologically motivated beyond the geometric description in terms of orbits of branches. Let $C_t$ be a nodal conic with stabilizer $G_t$ and nodal point $p$, which will also be fixed by $G_t$, and let $\nu\colon \tilde{C_t}\to C_t$ be the normalization. Observe that $\nu^{-1}(p)$ is a set of two points whose orbit type is the same as the set of branches $B_t$ of $C_t$ since $\tilde{C_t}\setminus \{\nu^{-1}(p)\}\cong C_t\setminus\{p\}$ equivariantly. The difference in equivariant Euler characteristics 
\begin{equation}\label{eq:normalization}
\chi^{G_t}(\tilde{C_t})-\chi^{G_t}(C_t) = \chi^{G_t}(\nu^{-1}(p))-\chi^{G_t}(p) = [\nu^{-1}(p)] - \{*\}
\end{equation} 
is equal to $\wt^{G_t}(C_t)$ in $A(G_t)$. 

The formula relating the base locus with the weighted sum of nodal orbits is stated in the main theorem:

\begin{theorem}\label{MAIN}Let $G$ be a finite group not isomorphic to either $\mathbb{Z}/2\times\mathbb{Z}/2$, $A_4$, or $D_8$, and assume $G$ acts linearly on $\CP^2$. Let $X$ be a $G$-invariant general pencil of conics in $\CP^2$, and let $[\Sigma]$ in $A(G)$ represent the base locus of $X$. Then  
\begin{equation}\label{maineq}\sum_{ \{G\cdot C_t\colon C_t \in X \text{ is nodal}\}} \wt^G(C_t)=[\Sigma]-\{*\}\end{equation}
 in $A(G)$. That is, there is a weighted count of orbits of nodal conics in $X$, valued in the Burnside ring of $G$.
\end{theorem}

This will be proved directly by explicitly checking that the formula holds for all possible invariant pencils of conics. Examples \ref{example_S4} and \ref{exa:Z4} give non-trivial examples of invariant general pencils of conics for $S_4$ and $\Z/4$ respectively. First, we prove two lemmas. 

\begin{lemma}
The only finite groups that can act invariantly on a general pencil of plane conics in $\CP^2$ are subgroups of $S_4$. 
\end{lemma}
\begin{proof}  If $G$ is a finite group that acts invariantly on a pencil of conics, then $G$ must fix the base locus of the pencil, i.e., $G$ must act bijectively on a set of four distinct points.  Thus $G$ must be a subgroup of $S_4$. \end{proof}

Any finite group that acts linearly on $\CP^2$ must be a finite subgroup of $PGL(3,\mathbb{C})$, a reference for which can be found in \cite{HL88}. We ultimately want to prove the theorem for each finite group $G$ that can act linearly on $\CP^2$ and invariantly on a pencil of conics. Thus we only need to consider linear group actions of subgroups of $S_4$, which is indeed a finite subgroup of $PGL(3,\mathbb{C})$.

\begin{lemma}\label{lem:nodal_pts_are_invariant}
    Let $G$ be a subgroup of $S_4$. Every $G$-invariant set $\Sigma$ of 4 points in general position in $\CP^2$ uniquely determines a $G$-invariant general pencil of plane conics. The set of 3 nodal conics in the pencil is $G$-invariant, as is the set of nodal points of the nodal conics.  
\end{lemma}
\begin{proof} Given a $G$-invariant set $\Sigma$ of $4$ general points in $\CP^2$, the $G$-invariant general pencil of conics $$X = \{sf(x,y,z)+tg(x,y,z)=0\colon [s:t]\in \CP^1\}\subseteq \CP^2$$ with base locus $\Sigma$ has generators $f(x,y,z)$ and $g(x,y,z)$ given by the homogenous degree 2 polynomials that generate the kernel of the linear map $\C^6\to \C^4$ that evaluates $x^2, y^2, z^2, xy, xz,$ and $yz$ on the points of $\Sigma$. Write $\Sigma = \{a,b,c,d\}$ so that  $L_{ij}\subseteq \CP^2$ is the line through $i,j\in \Sigma$. The 3 nodal conics in $X$ are given by the 3  products of distinct line pairs through $\Sigma$, we denote them $C_1 = L_{ab}\cup L_{cd}$, $C_2 = L_{ac}\cup L_{bd}$, and $C_3 = L_{ad}\cup L_{bc}$. Since $\Sigma$ is $G$-invariant, so is the set of lines $\{L_{ab}, L_{ac}, L_{ad}, L_{bc}, L_{bd}, L_{cd}\}$, whence so is the set of nodal conics $\{C_1, C_2, C_3\}$. See Figure \ref{figure:lines_through_Sigma}.

Each nodal conic $C_i$ has 1 nodal point $p_i$. The  $p_i$ are the intersection of the lines defining the nodal conic $C_i$, e.g., $p_1 = L_{ab}\cap L_{cd}$. Any $g\in G$ acts transitively on the orbit $G\cdot C_1$, and 
\[
g\cdot p_1 = g\cdot (L_{ab}\cap L_{cd}) = gL_{ab} \cap gL_{cd} = L_{ga,gb}\cap L_{gc,gd}
\] 
 in $G\cdot C_1$. This is not special to $C_1$, and in particular $g\cdot p_i$ is the nodal point of $g\cdot C_i$ for all $g\in G$ and nodal conics $C_i$. Thus the set of nodal points of the nodal conics, $\{p_1, p_2, p_3\}$, is also a $G$-invariant subset of $\CP^2$. We conclude that a $G$-invariant set $\Sigma$ of 4 general points in $\CP^2$ uniquely determines a $G$-invariant general pencil of plane conics, and the sets of 3 nodal conics and their nodal points are invariant under the action on $\CP^2$. \end{proof}

In other words, given a $G$-invariant set of 4 general points $\Sigma$, the 3 partitions of $\Sigma$ into 2 subsets of 2 points defines a map $G\hookrightarrow S_4\twoheadrightarrow S_3$, which need not be injective or surjective to imply that $G$ acts invariantly on the set of 3 nodal conics and their nodal points. 
As in the non-equivariant case, that a $G$-invariant set of 4 points in general position determines a $G$-invariant general pencil of conics can also be seen by a vector space observation: Every $G$-set $[\Sigma]$ of four general points in $\CP^2$ uniquely determines a pencil of conics which is the unique $G$-invariant 1-dimensional subspace of $\mathbb{P}\text{Sym}^2((\mathbb{C}^3)^{\vee})$ vanishing on $\Sigma$. 

Conversely, it is also clear that given a $G$-invariant pencil of general conics in $\CP^2$, there is a uniquely determined $G$-set of four points in general condition corresponding to the base locus of the pencil. 

\noindent \textit{Proof of Theorem \ref{MAIN}.} If $H_1, H_2\leq G$ are conjugate subgroups of a finite group $G$, then $A(H_1)\cong A(H_2)$, a proof can be found in \cite{Bou00}.  Thus we will only check that the theorem is true for each conjugacy class of subgroups of $S_4$. The conjugacy classes are:
\[e=\langle()\rangle\hspace{10mm} \Z/2\cong \langle(12)\rangle\hspace{10mm}S_3\cong\langle(123),(12)\rangle\]
\[ A_3=\{(), (123),(132)\}\hspace{10mm}\mathbb{Z}/4\cong\langle (1234)\rangle\hspace{10mm}A_4=\langle(123),(12)(34)\rangle \]
\[\mathbb{Z}/2\times\mathbb{Z}/2\cong\langle(12)(34),(13)(24)\rangle\hspace{10mm} D_8\cong\langle(1234),(13)\rangle \hspace{10mm} S_4. \]

For each  of these groups except  $\mathbb{Z}/2\times\mathbb{Z}/2$, $A_4$, and $D_8$, one can directly show that the theorem is true by computing weights of orbits of lines through $[\Sigma]$ for every possible orbit decomposition of $\Sigma$. Note that there may not be a $G$-invariant set of 4 points in general position in $\CP^2$ for \textit{every} possible orbit configuration of 4 points fixed by $G$, though there does exist a non-trivial $G$-invariant general pencil of conics for every subgroup of $S_4$, including for the groups $\Z/2\times \Z/2, A_4$, and $D_8$ for which the theorem is not true. See Examples \ref{example_S4} and \ref{exa:Z4} for thorough examples of invariant pencils of conics for $S_4$ and $\Z/4$. 

Showing  for each subgroup $G$ of $S_4$ that equation (\ref{maineq}) holds for any possible configuration of $[\Sigma]\in A(G)$ will prove the theorem.  
 The details are shown for $\mathbb{Z}/2$, $S_3$, and $A_3$, the same method works verbatim for $\Z/4$ and $S_4$.  

We will write $[\Sigma]=\{b_1, b_2, b_3, b_4\}\in A(G)$ for the base locus of a pencil, and the line through any $b_i$ and $b_j$ will be denoted by $L_{ij}$. Any nodal conic through $[\Sigma]$ has irreducible components given by the union of a pair of lines $\{L_{ij},L_{kl}\}$, which will be denoted as a $G$-set by $[L_{ij},L_{kl}]$ in $A(G)$.  

If $G=e$ is the trivial group, then any group action on $\CP^2$ is trivial. Thus this is simply the classical result over $\mathbb{C}$. 

If $G=\mathbb{Z}/2\cong\langle(12)\rangle$,  
the only genuine size four $G$-sets, and therefore the only  possible choices for  $[\Sigma]$ in $A(G)$, are the following: 
\begin{enumerate}
\item $[\Sigma]=4\{*\}$
\item $[\Sigma]=2[G]$
\item $[\Sigma]=2\{*\}+[G]$
\end{enumerate}

The fact that  $[\Sigma]$  
must be one of these cases relies on the fact that any genuine $G$-set $[S]\in A(G)$ has the form $$[S]=\sum_{(H_i)\colon H_i\leq G}n_i[G/H_i]=n_0[G/G]+n_1[G/e]$$ with $n_0, n_1\in\mathbb{Z}_{\geq 0}$ being the number of orbits with stabilizer equal to $G$ or $e$ respectively. Since $[\Sigma]$ is a genuine $G$-set, it must have one of the three configurations listed above.

Given a configuration of $[\Sigma]$, if there is a $G$-invariant pencil of conics $X$ determined by $[\Sigma]$, then the set of irreducible components of any nodal conic in $X$ is determined by one of the three configurations of a pair of distinct lines through $[\Sigma]$. Thus to see that the theorem is true for every configuration of $[\Sigma]$, and therefore true for $G=\mathbb{Z}/2$, we will compute the weight of each orbit of lines through any configuration of $[\Sigma]$ and show that the sum of the weights is equal to $[\Sigma]-\{*\}$ in $A(G)$. 

First consider the case where $[\Sigma]=4\{*\}$. All four points of $[\Sigma]$ are fixed, so $$\stab([L_{12}, L_{34}])=\stab([L_{13}, L_{24}])=\stab([L_{14}, L_{23}])=G$$ and each branch is fixed. Hence $\wt^G([L_{ij}, L_{kl}])=\{[L_{ij}, L_{kl}]\}-\{*\}=2\{*\}-\{*\}=\{*\}$ for any all $i,j,k,l\in\{1,2,3,4\}$. Hence the left-hand side of equation (\ref{maineq}) is $\sum \wt^G(B_t)=3\{*\},$ and the right-hand side of equation (\ref{maineq}) is $[\Sigma]-\{*\}=4\{*\}-\{*\}=3\{*\}$. 
 
Consider the second case where $[\Sigma]=2[G]$, and say that $\{b_1, b_2\}$ and $\{b_3, b_4\}$ are the orbits of $[\Sigma]$. In this case, $e$ is the element that acts trivially and $(12)$ is the element that acts nontrivially on each orbit, i.e., swaps $b_1$ and $b_2$ and swaps $b_3$ and $b_4$. Then for $g\in G$,
\[ 
g\cdot \{L_{12}, L_{34}\}= \begin{cases} 
      L_{12}, L_{34}, & \text{for }g=e \\
      L_{21}, L_{43}, & \text{for }g=(12) \\
   \end{cases}
  \]
  \[g\cdot \{L_{13}, L_{24}\}=\begin{cases}
   L_{13}, L_{24},& \text{for }g=e\\
   L_{24}, L_{13},& \text{for }g=(12)\\ \end{cases}
   \]
   \[
   g\cdot \{L_{14}, L_{23}\}=\begin{cases}
   L_{14}, L_{23},&\text{for }g=e\\
   L_{23}, L_{14},&\text{for }g=(12)\\ \end{cases}.
\]

The stabilizer of each nodal orbit is $G$, and so $\wt^G([L_{ij}, L_{kl}])=[L_{ij}, L_{kl}]-\{*\}$. Note that $[L_{12}, L_{34}]=2\{*\}$ because the branches are fixed by $G$, but $[L_{13}, L_{24}]$ and $[L_{14}, L_{23}]$ are both equal to $[G]$ in $A(G)$ because the branches are swapped by $G$.  Hence 
$$\wt^G([L_{12}, L_{34}])=[L_{12}, L_{34}]-\{*\}=2\{*\}-\{*\}=\{*\},$$
$$\wt^G([L_{13}, L_{24}])=[L_{13}, L_{24}]-\{*\}=[G]-\{*\}, \text{ and } $$
$$\wt^G([L_{14}, L_{23}])=[L_{14}, L_{13}]-\{*\}=[G]-\{*\}.$$
 Thus the left-hand side of equation (\ref{maineq}) is $\{*\}+2[G]-2\{*\}=2[G]-\{*\}$, and the right-hand side of equation (\ref{maineq}) is $[\Sigma]-\{*\}=2[G]-\{*\}$, as desired.

The last configuration of $[\Sigma]$ is $2\{*\}+[G]$. Say that $b_1$ and $b_2$ are the fixed points and $\{b_3, b_4\}$ are an orbit with $(12)$ swapping $b_3$ and $b_4$. Thus 
\[ 
g\cdot \{L_{12}, L_{34}\}= \begin{cases} 
      L_{12}, L_{34},&\text{for }g=e \\
      L_{12}, L_{43} ,&\text{for }g=(12) \\
   \end{cases}
\]\[
   g\cdot \{L_{13}, L_{24}\}=\begin{cases}
   L_{13}, L_{24},&\text{for }g=e\\
   L_{14}, L_{23},&\text{for }g=(12)\\ \end{cases}
   \]
   \[
  g\cdot  \{L_{14}, L_{23}\}=\begin{cases}
   L_{14}, L_{23},&\text{for }g=e\\
   L_{13}, L_{24},&\text{for }g=(12)\\ \end{cases}.
\]

Here, $\stab([L_{12}, L_{34}])=G$ and both lines are fixed, so $$\wt^G([L_{12}, L_{34}])=[L_{12}, L_{34}]-\{*\}=2\{*\}-\{*\}=\{*\}.$$ Note that $\stab([L_{13}, L_{24}])=\stab([L_{14}, L_{23}])=e$. Also,  
$(12)\cdot\{L_{13}, L_{24}\}=\{L_{14}, L_{23}\}$ and $(12)\cdot\{L_{14}, L_{23}\}=\{L_{13}, L_{24}\}$, so they are both in the same orbit. 
 Therefore we only need to count one of $G\cdot\{L_{13}, L_{24}\}$ or $G\cdot\{L_{14}, L_{23}\}$ in the weighted sum of nodal curves in the pencil determined by $[\Sigma]$. Making an arbitrary choice and using Lemma \ref{inflation}, 
 \begin{align*}
 \wt^G([L_{13}, L_{24}])=\infl_{e}^G(\wt^{e}([L_{13}, L_{24}]))&=\infl_{e}^G(2\{*\}-\{*\})\\
 &=\infl_{e}^G(\{*\})\\
 &=[G/e]=[G].
 \end{align*}

Finally, the left-hand side of equation (\ref{maineq}) is $$\wt^G([L_{12}, L_{34}])+\wt^G([L_{13}, L_{24}])=\{*\}+[G]$$ and the right-hand side of equation (\ref{maineq}) is $$[\Sigma]-\{*\}=[G]+2\{*\}-\{*\}=[G]+\{*\},$$ as desired. Therefore the theorem is true for $\mathbb{Z}/2$.

If $G=S_3\cong\langle(123),(12)\rangle$, the only possibilities for $[\Sigma]$ in $A(G)$ are: 
\begin{enumerate}
\item $[\Sigma]=4\{*\}$
\item $[\Sigma]=\{*\}+[G/\langle (12)\rangle]$
\item $[\Sigma]=2\{*\}+[G/\langle(123)\rangle]$
\end{enumerate}

The first case has been covered before, and is the same as the $\mathbb{Z}/2$ case when $[\Sigma]=4\{*\}$. 

Consider the second case where $[\Sigma]=\{*\}+[G/\langle (12)\rangle]$. Say $b_4$ is fixed and $\{b_1, b_2, b_3\}$ are an orbit so $\{b_1, b_2, b_3\}=[G/\langle(12)\rangle]=\{[()],[(123)],[(132)]\}$ in $A(G)$.  
Using the same method as for $\mathbb{Z}/2$ to find the stabilizer and orbit of each node, we observe 
that $\langle(12)\rangle$ is the stabilizer of all three sets of branches through $[\Sigma]$. Furthermore, all nodes are in the same orbit because $(123)\cdot \{L_{12}, L_{34}\}=\{L_{14}, L_{23}\}$, $(123)\cdot \{L_{14}, L_{23}\}=\{L_{13}, L_{24}\}$, and $(123)\cdot\{L_{13}, L_{24}\}=\{L_{12}, L_{34}\}$. 

Given that all nodes are in the same orbit, as in the third case for $\mathbb{Z}/2$ we only need to count one weighted node in the orbit to obtain the left-hand side of equation (\ref{maineq}). Arbitrarily choosing $[L_{12}, L_{34}]$, the branches of $[L_{12}, L_{34}]$ are equal to $2\{*\}$ in $A(\langle(12)\rangle)$. Thus $\wt^G([L_{12}, L_{34}])=\infl_{\langle(12)\rangle}^G(2\{*\}-\{*\})=[G/\langle(12)\rangle]$ 
Therefore, the left-hand side of equation (\ref{maineq}) is $[G/\langle (12)\rangle]$ and the right-hand side of equation (\ref{maineq}) is $[\Sigma]-\{*\}= [G/\langle(12)\rangle]$, as desired.

The last case to consider for $S_3$ is when $[\Sigma]=2\{*\}+[G/\langle(123)\rangle]$. Say that $b_3$ and $b_4$ are fixed and $\{b_1, b_2\}$ is an orbit with $b_1=[()]$ and $b_2=[(12)]$. Then $\stab([L_{12}, L_{34}])=G$ and $\wt^G([L_{12}, L_{34}])=2\{*\}-\{*\}=\{*\}$.

We can use the same method used for $\mathbb{Z}/2$ to find the stabilizer and orbit of each remaining node.  In this case only one of $[L_{13}, L_{24}]$ or $[L_{14}, L_{23}]$ needs to be counted in the left-hand side of equation (\ref{maineq}) because they are in the same orbit      with stabilizer $\langle(123)\rangle$. Arbitrarily choosing $[L_{13}, L_{24}]$, we see that $\wt^G([L_{13}, L_{24}])=\infl_{\langle (123)\rangle}^G(\{*\})=[G/\langle(123)\rangle]$.     Therefore, the left-hand side of equation (\ref{maineq}) is $[G/\langle(123)\rangle]+\{*\}$ and the right-hand side of equation (\ref{maineq}) is $[\Sigma]-\{*\}=2\{*\}+[G/\langle(123)\rangle]-\{*\}=[G/\langle(123)\rangle]+\{*\}$, as desired. Therefore the theorem is true for $S_3$.

   If $G=A_3=\{(), (123),(132)\}$, the only possibilities for $[\Sigma]$ in $A(G)$ are: 

\begin{enumerate}
\item $[\Sigma]=4\{*\}$
\item $[\Sigma]=\{*\}+[G]$
\end{enumerate}
Both cases can be checked using similar methods as $G=S_3$, no new ideas appear for $A_3$. The same is true for $G=\mathbb{Z}/4$ and $G=S_4$; one checks using the same ideas that \eqref{maineq} holds for all orbit configurations of base loci of an invariant pencil. \hfill$\Box$

We now give explicit examples of $G$-invariant pencils of conics for $S_4$ and $Z/4$. 

\begin{example}\label{example_S4} 
Let $V_{triv}$ denote the trivial 1-dimensional $G=S_4$ representation and $V_{sign}$  the 1-dimensional sign representation. Define the 3-dimensional $S_4$ representation $$V= V_{triv}\oplus V_{triv}\oplus V_{sign}$$ and consider the induced action on $\mathbb{P}V\cong \CP^2$. Let 
\[
\Sigma = \{[1:0:0], [0:1:0], [1:1:2], [1:1:-2]\}. 
\]
$\Sigma$ is an $S_4$-invariant set of points in $\CP^2$, and no three points of $\Sigma$ are colinear. Further, $[\Sigma]= 2\{*\}+[G/A_4]$ in $A(S_4)$ since $V_{sign}$ is trivial when restricted to $A_4$ and $[1:0:0], [0:1:1]$ are fixed points. Thus $[\Sigma]$ defines a general, $S_4$-invariant pencil of conics in $\CP^2$ 
\[
X = \{sf(x,y,z)+tg(x,y,z)=0\colon [s:t]\in \CP^1\}\subseteq \CP^2
\]
with defining equations 
\[
f(x,y,z) = -z^2+xy, \] \[g(x,y,z) = xz-yz.
\]
Labeling the points of $\Sigma$ as $a=[1:0:0],$ $b= [0:1:0],$ $c=[1:1:2],$ and $d=[1:1:-2]$, equations for the nodal conics in $X$ are: 
\begin{itemize} 
\item $L_{ab}\times L_{cd} = z(4x-4y)$, which is nodal at $[1:1:0]$. 
\item $L_{ac}\times L_{bd} = (2y-z)(2x+z)$, which is nodal at $[1:-1:-2]$. 
\item $L_{ad}\times L_{bc} = (2y+z)(2x-z)$, which is nodal at $[1:-1:2]$. 
\end{itemize}
where $L_{ij}$ denotes the equation for the line through points $i$ and $j$ in $\Sigma$. Writing $N$ for the set of nodal points,
\[
N= \{[1:1:0], [1:-1:-2], [1:-1:2]\}, 
\]
we see $N$ is an $S_4$-invariant set of 3 points in $\CP^2$ and $[N]=\{*\} + [S_4/A_4]$ in the Burnside ring $A(S_4)$.

One checks directly that $\wt^G([L_{ab},L_{cd}])=\{*\}$,  as the stabilizer of $L_{ab}\times L_{cd}$ is $G$ and the branches are fixed. $L_{ac}\times L_{bd}$ and $L_{ad}\times L_{bc}$ are in the same orbit, so we need only count the weight of one to count the whole orbit. Arbitrarily choosing $L_{ac}\times L_{bd}$, we see $\wt^G([L_{ac}, L_{bd}])=[G/A_4]$. Thus 
\begin{align*}
\sum_{ \{G\cdot C_t\colon C_t \in X \text{ is nodal}\}} \wt^G(C_t) &= \wt^G([L_{ab},L_{cd}]) + \wt^G([L_{ac},L_{bd}])  \\
&= \{*\} + [G/A_4]\\
&= [\Sigma]-\{*\}
\end{align*}
since $[\Sigma] = 2\{*\}+[G/A_4]$, as desired.
\end{example}

For any subgroup $H$ of $S_4$, we can restrict the action of $S_4$ above to $H$ to obtain an example of an $H$-invariant pencil of conics for which Theorem \ref{MAIN} holds. This constructs examples of Theorem \ref{MAIN} for $\Z/2, S_3, A_3$, and $\Z/4$. We do another example for $\Z/4$ with a different action on $\CP^2$:

\begin{example}\label{exa:Z4}
Let $V$ denote the standard 3-dimensional $S_4$ representation. $V$ is also is a $G=\Z/4$ representation obtained by restricting the action on $V$ to $\Z/4\leq S_4$, and $\Z/4$ acts linearly on $\mathbb{P}V\cong \CP^2$ with this action. Let 
\[
\Sigma = \{[3:2:1], [1:-2:-1], [1:2:-1], [1:2:3]\}. 
\]
$\Sigma$ is an $\Z/4$-invariant set of points in $\CP^2$. One can check that no three points of $\Sigma$ are colinear, and $[\Sigma]= [\Z/4]$ in $A(\Z/4)$. Thus $[\Sigma]$ defines a general, $\Z/4$-invariant pencil of conics in $\CP^2$ 
\[
X = \{sf(x,y,z)+tg(x,y,z)=0\colon [s:t]\in \CP^1\}\subseteq \CP^2
\]
with defining equations 
\[
f(x,y,z) = -x^2 + y^2 -z^2+2xz, \] \[g(x,y,z) = -2x^2+y^2-2z^2+xy+yz.
\]
Labeling the points of $\Sigma$ as $a=[3:2:1],$ $b= [1:-2:-1],$ $c=[1:2:-1],$ and $d=[1:2:3]$, equations for the nodal conics in $X$ are: 
\begin{itemize} 
\item $L_{ab}\times L_{cd} = (y-2z)(2x-y)$, which is nodal at $[1:2:1]$. 
\item $L_{ac}\times L_{bd} = (x-y-z)(x+y-z)$, which is nodal at $[1:0:1]$. 
\item $L_{ad}\times L_{bc} = (x-2y+z)(x+z)$, which is nodal at $[-1:0:1]$. 
\end{itemize}
where $L_{ij}$ denotes the equation for the line through points $i$ and $j$ in $\Sigma$. Writing $N$ for the set of nodal points,
\[
N = \{[1:2:1], [1:0:1], [-1:0:1]\}, 
\]
we see $N$ is $\Z/4$-invariant in $\CP^2$ and $[N]=[G/\langle(13)(24)\rangle] + \{*\}$ in the Burnside ring $A(\Z/4)$, where $[1:2:1]$ and $[-1:0:1]$ are in an orbit of size 2, stabilized by $\langle(13)(24)\rangle$, and $[1:0:1]$ is a $\Z/4$-fixed point. 

One checks directly that the stabilizer of $L_{ac}\times L_{bd}$ is all of $\Z/4$, and $\wt^G([L_{ac},L_{bd}])=[G/\langle(13)(24)\rangle]-\{*\}$. Further, $L_{ab}\times L_{cd}$ and $L_{ad}\times L_{bc}$ are in the same orbit, so we need only count the weight of one to count the whole orbit. Observe $\wt^G([L_{ab},L_{cd}])=[G]-[G/\langle(13)(24)\rangle]$.  Thus 
\begin{align*}
\sum_{ \{G\cdot C_t\colon C_t \in X \text{ is nodal}\}} \wt^G(C_t) &=  \wt^G([L_{ab},L_{cd}]) + \wt^G([L_{ac},L_{bd}]) \\
&= [G]-[G/\langle(13)(24)\rangle] + [G/\langle(13)(24)\rangle]-\{*\}\\
&= [G]-\{*\} \\
&= [\Sigma]-\{*\}
\end{align*}
since $[\Sigma] = [G]$, as desired.
\end{example}

Restricting the action of $\Z/4$ in the example above to any subgroup $H$ of $\Z/4$ gives an example of an $H$-invariant general pencil of conics for which Theorem \ref{MAIN} holds, so Example \ref{exa:Z4} also gives examples for $\Z/2, S_3$, and $A_3$. 

\begin{remark}\label{rmk:general_assumption}
    The assumption in Theorem \ref{MAIN} that $\Sigma$ is both $G$-invariant and consists of 4 points in general position is crucial. Given a $G$-invariant set of 4 points that are not in general position, e.g., 4 points on a line, one cannot  expect to move the points into general position without breaking $G$-invariance. 
\end{remark}

There is a  connection between Theorem \ref{MAIN} and the real signed count of nodal conics in a pencil. Non-equivariantly, the count of nodal conics in an general pencil is 3. Of these 3 nodal conics, the number of real conics is not invariant in general; different choices of 4 points may define different pencils of conics with different counts of real nodal conics. However, there is a certain signed count of real conics which is invariant, where real conics are designated a sign of $-1$ or $+1$ according to whether they are split or non-split. A plane curve $C$ with nodal point $p$ is called \textit{split} at $p$ if $p$ is a real point and $C$ has two real branches at $p$, \textit{non-split} at $p$ if $p$ is a real point and $C$ has two complex conjugate branches at $p$, and is complex if $p$ is not a real point. For example, $x^2-y^2=(x-y)(x+y)$ locally defines a split node, $x^2+y^2 = (x+iy)(x-iy)$ locally defines a non-split node. 

Given any set of 4 points in general position in $\CP^2$, we can write $n$ for the number of real points and $m$ for the number of complex conjugate pairs of points so that $4=n+2m$. Weighting the real split nodes in a general pencil of conics by $-1$ and the real non-split nodes by $+1$, the signed count of real nodal conics in any pencil with base locus consisting of $n$ real points and $m$ complex conjugate pairs of points is invariant. In other words, the real signed count does not depend on the actual choice of $n$ real and $m$ complex conjugate pairs of points, only that there are $n$ and $m$-many points in the base locus. Writing $W_n$ for the signed count of real nodal conics in a pencil $X$ with base locus $\Sigma$ consisting of $n = |\Sigma(\R)|$ real points, it is a consequence of \cite[Example 11.15]{L20} that 
\[
W_n = -(|\Sigma(\R)|-1).
\]

Now consider $G=\op{Gal}(\C/\R)\cong \Z/2$ acting on $\CP^2$. Let $\Sigma$ be a $G$-invariant set of $4$ general points in $\CP^2$, and say 
\[
[\Sigma] = n\{*\}+m[G]
\]
in $A(G)$, so again $\Sigma$ is $G$-invariant and consists of $n$ real points and $m$ complex conjugate pairs of points. $\Sigma$ defines a $G$-invariant general pencil of plane conics, $X$. 

Let $C_1, C_2,$ and $C_3$ denote the 3 nodal conics in $X$. Note that if $C_i$ is real, whether split or non-split, then $\stab(C_i)=G$. The branches of the split node are equal to $2\{*\}$ as a $G$-set since each branch is real and fixed by $G$. The branches of the non-split node are equal to $[G]$ as a $G$-set since they are a complex conjugate pair of 2 lines, swapped by the group action. This can also be seen from the normalization comparison $\wt^G(C_t) = \chi^G(\tilde{C_t})-\chi^G(C_t)$ in  \eqref{eq:normalization}. We  deduce 
\begin{equation}\label{eq:weight_signs}
\wt^G(C_i)=
\begin{cases} 
      2\{*\}-\{*\} = \{*\}, & \text{if $C_i$ is real, split} \\
      [G]-\{*\}, & \text{if $C_i$ is real, non-split} \\
   \end{cases}
\end{equation}
in $A(G)$. The cardinality of $\wt^G(C_i)$ is 1 regardless of whether $C_i$ is split or non-split, and the fixed point cardinalities of the weights of split and non-split nodes in \eqref{eq:weight_signs} are
\begin{equation}\label{eq:weight_fixed_points}
|\wt^G(C_i)^G|=
\begin{cases} 
      1, & \text{if $C_i$ is real, split} \\
      -1, & \text{if $C_i$ is real, non-split} \\
   \end{cases}
\end{equation}
In particular, it is immediate that $-|\wt^G(C_i)^G|$ recovers the non-equivariant sign of a real split or non-split node. Note finally that $|\Sigma^G| = |\Sigma(\R)| = n$. We have shown
\begin{corollary}\label{cor:real_count}
    Let $G=\op{Gal}(\C/\R)$ act on $\CP^2$ and let $[\Sigma]=n\{*\}+m[G]$ be a $G$-invariant set of 4 points in general position in $\CP^2$. Let $X$ be the $G$-invariant general pencil of conics with base locus $\Sigma$. Then
    \begin{equation}\label{eq:real_count_eq}
    \sum_{ \{G\cdot C_t\colon C_t \in X \text{ is nodal}\}} -|\wt^G(C_t)^G|=-(|\Sigma(\R)|-1)
    \end{equation}
    is equal to the signed count of real nodal conics in $X$. 
\end{corollary}

\section{Counterexamples} \label{counterexamplesection}

This section will give counterexamples where equation (\ref{maineq}) does not hold, which is for groups isomorphic to $\mathbb{Z}/2\times\mathbb{Z}/2$, $A_4$, and $D_8$.

\begin{counterexample}\label{counterexample_K4} First consider the case when $$G=\mathbb{Z}/2\times\mathbb{Z}/2=\{e,(12)(34),(13)(24),(14)(23)\}.$$ We have an action of $S_4$, and therefore of $G$, on $\CP^2$ using the standard $PGL(3,\mathbb{C})$-representation of $S_4$  
given by 
$$g_1:=e\mapsto\begin{bmatrix} 1 & 0 & 0 \\ 0 & 1 & 0 \\ 0 & 0 & 1 \\ \end{bmatrix}\hspace{5mm} \text{,}\hspace{5mm} g_2:=(12)(34)\mapsto\begin{bmatrix} -1 & 1 & 0 \\ 0 & 1 & 0\\ 0 & 1 & -1\\ \end{bmatrix},$$ 
$$g_3:=(13)(24)\mapsto\begin{bmatrix} 0 & -1 & 1 \\ 0 & -1 & 0 \\ 1 & -1 & 0 \\ \end{bmatrix}\hspace{5mm} \text{, and}\hspace{5mm} g_4:=(14)(23)\mapsto\begin{bmatrix} 0 & 0 & -1 \\ 0 & -1 & 0\\ -1 & 0 & 0 \\ \end{bmatrix}.$$
\bigbreak

Consider the point $p=[1,2,3]\in\CP^2$. Using the $g_i$ above to also denote the action on $\CP^2$, define the $G$-set 
\begin{align*}
[\Sigma]&=\{b_1:=g_1\cdot p, b_2:=g_2\cdot p, b_3:=g_3\cdot p, b_4:=b_4\cdot p\}\\
&=\{[1:2:3],[1:2:-1],[1:-2:-1], [-3:-2:-1]\}.
\end{align*}
We will show that $[\Sigma]$ is the $G$-invariant base locus of a general pencil, i.e., that no three points in $[\Sigma]$ are colinear, but that (\ref{maineq}) does not hold for the pencil of conics associated to $[\Sigma]$.

Note first that $[\Sigma]$ is $G$-invariant by construction, with $g\cdot b_i=b_{i+1}$ for $1\leq i \leq 3$ and $g\cdot b_4=b_1$ for all $g\neq e$ in $G$. Furthermore, $[\Sigma]$ was defined to be isomorphic to $G$ as a $G$-set, with the isomorphism being given by $b_i\mapsto g_i$ for $1\leq i \leq 4$. Therefore $[\Sigma]=[G]$ in $A(G)$. It is straightforward to check that no three points in $[\Sigma]$ lie on a line, so $\Sigma$ defines a $G$-invariant general pencil of conics.

Now we will show that equation (\ref{maineq}) does not hold for $[\Sigma]$. By observing where each element of $G$ maps each line through $\Sigma$, we can see that  each pair of lines through $[\Sigma]$ has stabilizer $G$. Each node has branches equal to $[G/H]$ for $H\leq G$ the subgroup that fixes both branches in addition to the union. Thus one can check that $$\wt^G([L_{12}, L_{34}])=[G/\langle(12)(34)\rangle]-\{*\},$$ $$\wt^G([L_{13}, L_{24}])=[G/\langle(13)(24)\rangle]-\{*\}, \text{ and}$$  $$\wt^G([L_{14}, L_{23}])=[G/\langle(14)(23)\rangle]-\{*\}.$$ 
Therefore the left-hand side of (\ref{maineq}) is 
\begin{equation}\label{eq:LHS_K4}[G/\langle(12)(34)\rangle]+[G/\langle(13)(24)\rangle]+[G/\langle(14)(23)]-3\{*\}.\end{equation}
The right-hand side of (\ref{maineq}) is $[\Sigma]-\{*\}=[G]-\{*\}$.

We will use Proposition 1.2.2 in \cite{TD79} to show the left-hand and right-hand sides of equation (\ref{maineq}) are not equal in $A(G)$.  In particular, we need to compute for each $K\leq G$ the number of $K$-fixed points of the left-hand side and the right-hand side of (\ref{maineq}). Writing $$S_1=[G/\langle(12)(34)\rangle]+[G/\langle(13)(24)\rangle]+[G/\langle(14)(23)]-3\{*\}$$ and $$S_2=[\Sigma]-\{*\}=[G]-\{*\}$$ for the left and right-hand sides of (\ref{maineq}), cardinalities of fixed points of subgroups of $G$ are: 
  \begin{center} 
\begin{tabular}{c|c|c}
$(K)\leq G$ & $|(S_1)^K|$ & $|(S_2)^K|$\\
\hline 
$e$  & 3 & 3 \\ 
$\langle(12)(34)\rangle$ & -2 & -1 \\ 
$\langle(13)(24)\rangle$ & -2 & -1 \\
$\langle(14)(23)\rangle$ & -2 &  -1\\ 
$G$ & -3 & -1\\
\end{tabular}
\end{center} 
\centerline{Table 2: Fixed point cardinalities of Equation \eqref{maineq} for subgroups of $(\Z/2)^2$}

The fact that there are subgroups of $G$ for which the number of fixed points of the LHS and RHS are not equal implies that the two sets are not equal in $A(G)$. Therefore equation (\ref{maineq}) fails for $G=\mathbb{Z}/2\times\mathbb{Z}/2$ and $[\Sigma]=[G]$. 
\end{counterexample}

\begin{counterexample}\label{counterexample_A4} Let $V$ be the standard 3-dimensional representation of $S_4$, and consider the induced action on $\mathbb{P}V\cong \CP^2$. $A_4$ acts on $\CP^2$ by restricting the action of $S_4$ on $\CP^2$. Again let 
\[
\Sigma = \{[1:2:3], [1:2:-1], [1:-2:-1], [3:2:1]\}. 
\]
$\Sigma$ is an $A_4$-invariant set of points in $\CP^2$. Furthermore, $[\Sigma] = [A_4/A_3]$ in the Burnside ring $A(A_4)$. The points of $\Sigma$ are in general position, and a basis for the $A_4$-invariant pencil of conics with base locus $\Sigma$ is given by 
\[
X=\{sf(x,y,z)+tg(x,y,z)=0\colon [s:t]\in \CP^1\}\subseteq \CP^2
\]
with 
\[
f(x,y,z) = -x^2+y^2-z^2 +2xz, \] \[g(x,y,z) = -2x^2 +y^2 -2z^2 + 2xy + 2yz.
\]
Labeling the points of $\Sigma$ as $a=[1:2:3],$ $b= [1:2:-1],$ $c=[1:-2:-1],$ and $d=[3:2:1]$, equations for the nodal conics in $X$ are: 
\begin{itemize} 
\item $L_{ab}\times L_{cd} = (2x-y)(y-2z)$, which is nodal at $[1:2:1]$. 
\item $L_{ac}\times L_{bd} = (x+y-z)(x-y-z)$, which is nodal at $[1:0:1]$. 
\item $L_{ad}\times L_{bc} = (x-2y+z)(x+z)$, which is nodal at $[-1:0:1]$. 
\end{itemize}
where $L_{ij}$ denotes the equation for the line through points $i$ and $j$ in $\Sigma$. The set of nodal points of the 3 nodal conics, $N = \{[1:2:1],[1:0:1],[-1:0:1]\}$, is $A_4$-invariant and equal to $[A_4/K_4]$ in $A(A_4)$. 

All three nodal conics are in the same orbit. The stabilizer of $L_{ab}\times L_{cd}$ is $K_4=\Z/2\times \Z/2$, with branches equal to $[K_4/\langle(14)(23)\rangle]$ in $A(K_4)$. Thus $$\wt^{A_4}([L_{ab},L_{cd}])=\op{inf}_{K_4}^{A_4}([K_4/\langle(14)(23)\rangle]-\{*\}) = [A_4/\langle(14)(23)\rangle] - [A_4/K_4]$$ in $A(A_4)$. Since all of $L_{ab}\times L_{cd}$, $L_{ac}\times L_{bd}$, and $L_{ad}\times L_{bc}$ are all in the same orbit, the left-hand side of \eqref{maineq} is 
\[
\sum_{ \{A_4\cdot C_t\colon C_t \in X \text{ is nodal}\}} \wt^{A_4}(C_t) = \wt^{A_4}([L_{ab},L_{cd}]) = [A_4/\langle(14)(23)\rangle] - [A_4/K_4].
\]
The right-hand side of \eqref{maineq} is 
\[
[\Sigma]-\{*\} = [A_4/A_3]-\{*\}.
\]
Since the cardinality of the $H$-fixed points of $[A_4/\langle(14)(23)\rangle] - [A_4/K_4]$ and $[A_4/A_3]-\{*\}$ are different for each nontrivial $H\le A_4$, 
\[
\sum_{ \{A_4\cdot C_t\colon C_t \in X \text{ is nodal}\}} \wt^{A_4}(C_t) \neq [\Sigma]-\{*\}
\]
in $A(A_4)$ by the same argument in Counterexample \ref{counterexample_K4} using \cite[Proposition 1.2.2]{TD79}. 
\end{counterexample} 

Theorem \ref{MAIN} is true for the other configurations of base loci for $A_4$,  $4\{*\}$ and $\{*\} + [A_4/(\mathbb{Z}/2)^2]$, and can be checked directly in the same manner as the proof of Theorem \ref{MAIN}. Counterexample \ref{counterexample_A4} is interesting because $\Sigma = \{[1:2:3], [1:2:-1], [1:-2:-1], [3:2:1]\}$ is an $H$-invariant set of $4$ points for any subgroup $H$ of $A_4$, and Theorem \ref{MAIN} holds for the $H$-invariant pencil of conics with base locus $\Sigma$ for $H$ which are strict subgroups of $A_4$. For example, the same base locus defines a $\Z/4$-invariant pencil of conics for which Theorem \ref{MAIN} is true, see Example \ref{exa:Z4}.

\begin{counterexample}\label{counterexample_D8}Finally, we'll construct a counterexample for $G=D_8$ 
using a different approach. We will start with a 3-dimensional representation of $D_8$ on $(\mathbb{C}^3)^{\vee}$ to obtain a 6-dimensional representation of $D_8$ on $V:=\text{Sym}^2((\mathbb{C}^3)^{\vee})$. The $G$-invariant vector space $V$ has a decomposition into irreducible sub-representations using the common eigenspaces of the generators of $D_8$, and from these irreducible sub-representations the pencils of conics correspond to the spans of irreducible 1-dimensional sub-representations. 

Write $r:=(13)$ and $s:=(1234)$ for the generators of  $D_8$. For reference, the character table of $D_8$ is given below, where $\chi_1,\chi_2, \chi_3$, and $\chi_4$ are the four 1-dimensional representations of $D_8$ and $\sigma$ is the unique 2-dimensional representation of $D_8$. The character of any 3-dimensional representation of $D_8$ is  given by $\chi=\sigma+\chi_i$ or $\chi=\chi_i+\chi_j+\chi_k$, $i,j,k\in\{1,2,3,4\}$. We will produce two counterexamples to Theorem \ref{MAIN} using a 3-dimensional representation of $W$ with character $\chi=\sigma+\chi_i$. 

\begin{center} 
\begin{tabular}{c c|c c|c c}
\text{ } & $e$ & $r^2$ & $r$ & $s$ & $sr$  \\
\hline
$\chi_1$ &1  &1  &1 &1 &1  \\ 
$\chi_2$ & 1 & 1 &1 & -1 & -1 \\ 
$\chi_3$ & 1 & 1 &-1 &1 &-1 \\
$\chi_4$ & 1 & 1 & -1 & -1 & 1 \\
$\sigma$ & 2 &  -2 & 0& 0&0 \\
\end{tabular}
\end{center}
\centerline{Table 3: Character table of $D_8$}

The unique 2-dimensional representation of $D_8$ is given by 
\begin{displaymath}
r\mapsto\begin{bmatrix} 0 & -1\\ 1 & 0\end{bmatrix}, \hspace{10mm} s\mapsto\begin{bmatrix} 1 & 0 \\ 0 & -1\end{bmatrix}.
\end{displaymath}
Therefore a 3-dimensional $D_8$ representation of $(\mathbb{C}^3)^\vee$ with basis $\{x,y,z\}$ and with character $\sigma+\chi_i$ is given by
\begin{displaymath}
r\mapsto \begin{bmatrix} 0 & -1 & 0\\1 & 0 & 0 \\ 0 & 0 & a\end{bmatrix}=:M_r, \hspace{10mm} s\mapsto\begin{bmatrix} 1 & 0 & 0 \\ 0 & -1 & 0 \\ 0 & 0 & b\end{bmatrix}=:M_s
\end{displaymath}

where $a,b\in\{\pm 1\}$ are equal to the values of $\op{tr}\chi_i(r)$ and $\op{tr}\chi_i(s)$ respectively. Using the basis $\{x^2,y^2,z^2,yz,xz,xy\}$ for $V$, observe that the 6-dimensional representation of $V$ obtained from the symmetric power of $W$ is given by 
  
$$r\mapsto \begin{bmatrix}0 & 1 & 0 & 0 & 0& 0\\ 1 & 0 & 0 & 0 & 0 & 0 \\0 & 0 & 1 & 0 & 0 & 0 \\ 0 & 0 & 0 & 0  & a & 0 \\ 0 & 0 & 0 & -a & 0 & 0 \\ 0 & 0 & 0 & 0 & 0 & -1\end{bmatrix}\text{, \hspace{3mm}} 
s\mapsto\begin{bmatrix}1 & 0 & 0 & 0 & 0 & 0\\ 0 & 1 & 0 & 0 & 0 & 0\\ 0 & 0 & 1 & 0 & 0 & 0\\ 0 & 0 & 0 & -b & 0 & 0 \\ 0 &0&0&0&b&0\\ 0&0&0&0&0&-1  \end{bmatrix}.
$$
  
Note $\text{Sym}^2(M_r)$ is the image of $r$ and $\text{Sym}^2(M_s)$ is the image of $s$. 
The common 1-dimensional $G$-invariant eigenspaces of $\text{Sym}^2(M_r)$ and $\text{Sym}^2(M_s)$ are $$z^2\text{, } xy x^2-y^2\text{, } \text{and } x^2+y^2.$$ 
There is also a 2-dimensional common $G$-eigenspace with basis $\{yz,xz\}$. Therefore, the possible $G$-invariant pencils of conics in $\mathbb{P}^2$ with action coming from the representation of $D_8$ on $V$ with character $\text{Sym}^2(\sigma+\chi_i)$ are: 
\begin{enumerate}
\item $\{\mu YZ+\lambda XZ=0\colon [\mu,\lambda]\in \CP^1\}$
\item $\{\mu Z^2+\lambda (X^2-Y^2)=0\colon [\mu,\lambda]\in \CP^1\}$
\item $\{\mu Z^2+\lambda(X^2+Y^2)=0\colon [\mu,\lambda]\in \CP^1\}$
\item $\{\mu Z^2+\lambda XY=0\colon [\mu,\lambda]\in \CP^1\}$
\item $\{\mu(X^2-Y^2)+\lambda(X^2+Y^2)=0\colon [\mu,\lambda]\in \CP^1\}$
\item $\{\mu(X^2-Y^2)+\lambda XY=0\colon [\mu,\lambda]\in \CP^1\}$
\item $\{\mu(X^2+Y^2)+\lambda XY=0\colon [\mu,\lambda]\in \CP^1\}$
\item $\{\mu(X^2-Y^2)+\lambda(a(X^2+Y^2)+bZ^2)=0\colon [\mu,\lambda]\in \CP^1\}$
\item $\{\mu XY+\lambda(a(X^2+Y^2)+bZ^2)=0\colon [\mu,\lambda]\in \CP^1\}$
\end{enumerate}
In the first 7 cases, one can check that the conics defining the pencil are not in general position. 
We will show that Theorem \ref{MAIN} doesn't hold for (8) above, and case (9) is similar.

 In the 8$^{\text{th}}$ case, $[\Sigma]= \{b_1, b_2, b_3, b_4\}$ where $$b_1=\left[1:1:i\sqrt{\frac{2a}{b}}\right]\text{, } b_2= \left[1:-1:i\sqrt{\frac{2a}{b}}\right],$$ $$b_3=\left[1:1:-i\sqrt{\frac{2a}{b}}\right] \text{, and } b_4= \left[1:-1:-i\sqrt{\frac{2a}{b}}\right].$$ Since the representation on $V$ is the symmetric power of the representation on $W$ given by $r\mapsto M_r$ and $s\mapsto M_s$, with $M_r$ and $M_s$ depending on the values of $a=\op{tr}\chi_i(r)$ and $b=\op{tr}\chi_i(s)$ respectively, the four cases we need to consider are $a=b=1$, $a=1$ and $b=-1$, $a=-1$ and $b=1$, and $a=b=-1$.

We will look at the case when $a=b=1$, as the others are similar. In this case, using the matrices $M_r$ and $M_s$ one can check that: 
\begin{center}
\begin{tabular}{ c c  }
$ e\cdot b_1=b_1$ & $(14)(23)\cdot b_1=b_1 $\\ 
$(13)\cdot b_1=b_2$ & $(1432)\cdot b_1=b_2$  \\  
$(13)(24)\cdot b_1=b_3$& $(12)(34)\cdot b_1=b_3$\\
 $(1234)\cdot b_1=b_4$ & $ (24)\cdot b_1=b_4$
\end{tabular} $\hspace{2mm}$
\end{center}
so that $[\Sigma]=[G/\langle(14)(23)\rangle]=\{b_1=[()], b_2=[(13)], b_3=[(13)(24)], b_4=[(24)]\}$. 
 
Using the method in the proof of Theorem \ref{MAIN} for finding stabilizers and orbits of each node, $$\stab([L_{12}, L_{34}])=\stab([L_{14}, L_{23}])=\{e,(13)(24),(13),(24)\}:= H_1,$$ and $H_1\cong \mathbb{Z}/2\times\mathbb{Z}/2$. Furthermore, $L_{12}, L_{34}$ and $L_{14}, L_{23}$ are in the same orbit in $A(G)$.  
Therefore we only need to count one of $[L_{12}, L_{34}]$ or $[L_{14} , L_{23}]$ in the left-hand side of equation (\ref{maineq}). Arbitrarily choosing $[L_{12}, L_{34}]$, observe that $[L_{12}, L_{34}]=[H_1/\langle(13)\rangle]$ in $A(H_1)$. Therefore, 
\begin{align*}
\wt^G([L_{12}, L_{34}])&=\inf\text{}_{H_1}^G([H_1/\langle(13)\rangle]-\{*\})\\
&=\left[\frac{H_1}{\langle(13)\rangle}\right]\cdot\left[\frac{G}{H_1}\right]-\left[G/H_1\right]\\
&=[G/\langle(13)\rangle]-[G/H_1]
\end{align*}
in $A(G)$. We can also observe that $\stab([L_{13}, L_{24}])=G$, and $[L_{13}, L_{24}]=[G/H_2]$ in $A(G)$ where $H_2:=\{e,(12)(34),(13)(24),(14)(23)\}$. Thus $$\wt^G([L_{13}, L_{24}])=[G/H_2]-\{*\}.$$

It is worth noting that $H_1\cong H_2$ in $S_4$, but $H_1$ and $H_2$ are \textit{not} conjugate in $D_8$. Therefore the two $G$-sets $[G/H_1]$ and $[G/H_2]$ are not equal in $A(G)$. The left-hand side of equation (\ref{maineq}) is $$\wt^G([L_{12}, L_{34}])+\wt^G([L_{13}, L_{24}])=[G/\langle(13)\rangle]-[G/H_1]+[G/H_2]-\{*\}.$$ 

Given that $[\Sigma]=[G/\langle(14)(23)\rangle]$, the right-hand side of equation (\ref{maineq}) is $[G/\langle(14)(23)\rangle]-\{*\}$.

As with $\mathbb{Z}/2\times\mathbb{Z}/2$, we can use \cite{TD79} Proposition 1.2.2  to show that Theorem \ref{MAIN} is not true for this case. In particular, we can show that for some $K\leq G$, the number of $K$-fixed points of the left and right-hand sides of are not equal. Writing $$S_1=[G/\langle(13)\rangle]-[G/H_1]+[G/H_2]-\{*\}$$ and $$S_2=[G/\langle(14)(23)\rangle]-\{*\}$$ for the left and right-hand sides of (\ref{maineq}), one sees 
\[
|(S_1)^{\langle(14)(23)\rangle}| = -1 \ne 3 = |(S_2)^{\langle(14)(23)\rangle}|. 
\]

The fact that the number of $\langle(14)(23)\rangle$-fixed points differ 
implies that the left-hand side and right-hand side of \eqref{maineq} are not equal in $A(G)$ in this case. Therefore Theorem \ref{MAIN} is not true for $D_8$. 

It is worth noting that even if $[G/H_1]=[G/H_2]$ in $A(G)$, Theorem \ref{MAIN} still would not hold. In that case, the left-hand side of equation (\ref{maineq}) would be $[G/\langle(13)\rangle]-\{*\}$ and the right-hand side would be $[\Sigma]-\{*\}=[G/\langle(14)(23)\rangle]-\{*\}$. The same issue arises, $[D_8/\langle(13)\rangle]=[D_8/\langle(14)(23)\rangle]$ in $A(S_4)$ because $\langle(13)\rangle$ and $\langle(14)(23)\rangle$ are conjugate in $S_4$, but not in $D_8$.  
The fact that  $D_8$ has subgroups which are conjugate in $S_4$ but not in $D_8$ is the crux of why Theorem \ref{MAIN} fails in for this group. \end{counterexample}

\newpage

\bibliographystyle{amsalpha}
\bibliography{250430_conics}

\end{document}